\newtheorem{thm}{Theorem}
\newtheorem{prop}[thm]{Proposition}
\theoremstyle{definition}
\newcommand\0{\mathbf{0}}
\newcommand\1{\mathbbm{1}}
\newcommand\NN{\mathbb{N}}
\newcommand\RR{\mathbb{R}}
\newcommand\cB{\mathcal{B}}
\newcommand\cN{\mathcal{N}}
\newcommand\cP{\mathcal{P}}
\newcommand\pp{\mathbf{p}}
\renewcommand\iff{\Leftrightarrow}
\renewcommand\implies{\Rightarrow}
\newcommand\ceil[1]{\lceil{#1}\rceil}
\begin{document}

\title{Erratum to:\\Lattice point methods for combinatorial games}
\author{Alan Guo}
\author{Ezra Miller}
\address{Department of Mathematics\\Duke University\\Durham, NC 27708}
\curraddr[Guo]{Electrical Engineering \& Computer Science\\MIT\\Cambridge, MA 02139}
\email{aguo@mit.edu, ezra@math.duke.edu}
\date{26 May 2011}

\maketitle

This note corrects Definition~6.3, Proposition~6.7, and Section~7, as
specified below.

Proposition~6.7 is false for the notion of squarefree game in
Definition~6.3, i.e., the equivalent conditions in Proposition~6.2.
Henceforth those equivalent conditions define \emph{weakly squarefree
games}.  For example, the game on $\NN^3$ with rule set
$\{(1,0,0),(0,1,0),(0,0,1),(1,1,0)\}$ is weakly squarefree but its
P-positions are easily shown not to satisfy Proposition~6.7.

The intended notion of \emph{squarefree game} is any game satisfying
the conditions in the following, whose parts mirror Proposition~6.2 as
closely as possible.

\begin{prop}
For a rule set $\Gamma$, the following are equivalent.
\begin{enumerate}
\item%
For each $\gamma \in \Gamma$, and $p,q \in \NN^d$, if $p + q - \gamma
\in \NN^d$ then $p - \gamma \in \NN^d$ or $q - \gamma \in \NN^d$.

\item%
(There is no appropriate analogue of condition~2 in Proposition~6.2.)

\item%
Each $\gamma \in \Gamma$ has at most one positive entry, and that
entry is at most~$1$.

\item%
The positive part $\gamma_+$ is a 0-1 vector with at most one~$1$, for
all $\gamma \in \Gamma$.

\item%
Each move decreases the number of heaps of exactly one size, and the
amount of the decrease is~$1$.
\end{enumerate}
\end{prop}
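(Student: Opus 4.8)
The plan is to treat the equivalences (3) $\iff$ (4) and (4) $\iff$ (5) as dictionary translations and to concentrate on the genuine content, the equivalence (1) $\iff$ (4). For (3) $\iff$ (4): the $i$-th entry of the positive part is $\max(\gamma_i,0)$, which for integer $\gamma_i$ lies in $\{0,1\}$ exactly when $\gamma_i \le 1$, and equals $1$ exactly when $\gamma_i = 1$; hence ``$\gamma_+$ is a 0-1 vector with at most one $1$'' says precisely that $\gamma$ has at most one positive entry and that entry is at most $1$. For (4) $\iff$ (5): reading $\gamma$ as a heap move, the $i$-th coordinate $\gamma_i$ is the number of size-$i$ heaps removed, so a decrease in the count of size-$i$ heaps means $\gamma_i > 0$ with amount $(\gamma_+)_i$; thus (5) says $\gamma_+$ is supported on a single coordinate with value $1$, which is (4). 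Here I would invoke the standing convention that every move strictly reduces the position, so that $\gamma_+ \ne \0$ and the ``exactly one'' of (5) matches the ``at most one'' of (3)/(4).

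For the core equivalence (1) $\iff$ (4), I would first reduce the membership conditions to coordinatewise inequalities. For any $r \in \NN^d$ one has $r - \gamma \in \NN^d$ iff $r_i \ge \gamma_i$ for every $i$; since $r_i \ge 0$ this is automatic wherever $\gamma_i \le 0$, so it is equivalent to $r \ge \gamma_+$ coordinatewise. Applying this with $r = p$, $r = q$, and $r = p+q$, condition (1) for a fixed $\gamma$ becomes: for all $p,q \in \NN^d$, if $p + q \ge \gamma_+$ then $p \ge \gamma_+$ or $q \ge \gamma_+$. Writing $v := \gamma_+ \in \NN^d$, and noting that (1) and (4) are both conditions imposed on every $\gamma \in \Gamma$ separately, the Proposition reduces to the purely combinatorial claim that this additive condition on $v$ holds iff $v \in \{\0, e_1, \dots, e_d\}$.

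I would prove that claim in both directions. For the easy direction, if $v = \0$ then $p \ge v$ always holds, and if $v = e_k$ then $p + q \ge e_k$ forces $p_k + q_k \ge 1$, hence $p_k \ge 1$ or $q_k \ge 1$, i.e.\ $p \ge e_k$ or $q \ge e_k$. For the converse I would argue by contraposition, exhibiting a counterexample whenever $v \notin \{\0, e_1,\dots,e_d\}$. If $v$ has two distinct nonzero coordinates $i \ne j$, take $p := v - v_i e_i$ and $q := v_i e_i$; then $p + q = v \ge v$, while $p_i = 0 < v_i$ and $q_j = 0 < v_j$, so neither $p \ge v$ nor $q \ge v$. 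If instead $v = v_k e_k$ with $v_k \ge 2$, take $p = q := (v_k - 1)e_k$; then $p + q = (2v_k - 2)e_k \ge v_k e_k = v$, yet $p_k = q_k = v_k - 1 < v_k$. In either case the additive condition fails, which completes the characterization.

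The only delicate points I anticipate are bookkeeping rather than conceptual: getting the translation $r - \gamma \in \NN^d \iff r \ge \gamma_+$ exactly right (so that (1) depends on $\gamma$ only through $\gamma_+$), and reconciling the ``exactly one'' phrasing of (5) with the ``at most one'' phrasing of (1), (3), (4) via the assumption that a move decreases the position and hence has a positive entry. Once the reduction to $v = \gamma_+$ is in place, the two counterexample constructions are routine, so I expect no substantive obstacle beyond organizing these cases cleanly.
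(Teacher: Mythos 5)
Your proof is correct, and its core ingredients coincide with the paper's: the easy direction rests on the pigeonhole fact that $p_k + q_k \geq 1$ forces $p_k \geq 1$ or $q_k \geq 1$, and the hard direction is by explicit counterexamples splitting the positive part of $\gamma$ into two pieces, neither of which dominates it. Indeed, your case (a) pair $p = v - v_i e_i$, $q = v_i e_i$ is essentially the paper's $p = \gamma \vee \0 - e_j$, $q = e_j$, and your case (b) pair $p = q = (v_k - 1)e_k$ plays the role of the paper's $p = q = \lceil M/2 \rceil \1$. What differs is the organization, and your packaging buys something real. The reduction lemma $r - \gamma \in \NN^d \iff r \geq \gamma_+$ (for $r \in \NN^d$) lets you prove $1 \iff 4$ directly as a self-contained combinatorial characterization of the single vector $v = \gamma_+$, whereas the paper proves $1 \iff 3$ working with $\gamma$ itself; moreover, by noting that $v = \0$ satisfies the splitting condition vacuously, your direction $4 \implies 1$ needs no game-theoretic input at all, while the paper's $3 \implies 1$ invokes the rule-set axioms (pointedness of $\RR_+\Gamma \supseteq \RR_+^d$) to guarantee an entry equal to~$1$. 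You then spend that axiom exactly where it is genuinely needed: reconciling the ``exactly one'' of condition~5 with the ``at most one'' of conditions 3 and~4, a mismatch that the paper's one-line treatment of $4 \iff 5$ passes over silently. One point of precision: the standing hypothesis is not that moves strictly reduce the position coordinatewise (they need not --- a move may increase some coordinates), but rather that $\RR_+\Gamma$ is pointed and contains $\RR_+^d$; this still yields $\gamma_+ \neq \0$, since $\gamma \leq \0$ with $\gamma \neq \0$ would place both $\gamma$ and $-\gamma$ in the pointed cone. With that rephrasing, your argument is complete.
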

\begin{proof}
1 $\implies$ 3: Fix $\gamma = (\gamma_1,\ldots,\gamma_d) \in \Gamma$.
First we show that the maximum entry of~$\gamma$ is at most $1$.  Let
$M = \max\{\gamma_1,\ldots,\gamma_d\}$, and let $p =
\ceil{\frac{M}{2}} \1$ where $\1$ is the vector with all entries equal
to $1$.  Then the minimum of the entries of $2p - \gamma$ is $1$ for
odd $M$ and $0$ for even $M$, and hence $2p - \gamma \in \NN^d$.
However, the minimum of the entries of $p - \gamma$ is
$\ceil{\frac{M}{2}} - M$ which is negative if $M > 1$.  But $p -
\gamma \in \NN^d$ by hypothesis, so $M \leq 1$.  Next we show that at
most one entry equals~~$1$.  Suppose that more than one entry is~$1$,
say $\gamma_i = \gamma_j = 1$ where $i \ne j$.  For each $k =
1,\ldots,d$, let $e_k$ be the $k$-th basis vector.  Let $p = \gamma
\vee \0 - e_j$
and set $q = e_j$.  Then $p + q = \gamma \vee \0$ so $p + q - \gamma
\in \NN^d$, but $(p - \gamma)_j = -1$ and $(q - \gamma)_i = -1$.

3 $\implies$ 1: Fix $\gamma \in \Gamma$.  Then $\gamma$ has at least
one entry $\gamma_i = 1$, and therefore exactly one, because the real
cone $\RR_+\Gamma$ contains $\RR_+^d$ and is pointed.  If $p,q \in
\NN^d$ with $p + q - \gamma \in \NN^d$, then $p_i + q_i \geq 1$,
whence at least one of $p_i$ and $q_i$ is $\geq 1$.  Say $p_i \geq 1$.
Then $p - \gamma \in \NN^d$ because $\gamma_j \leq 0$ for all $j \neq
i$.

3 $\iff$ 4: This is straightforward.

4 $\iff$ 5: In the notation of Examples 2.1 and 2.5, condition~5 is
the translation of condition~4 into the language of heaps.
%
%
\end{proof}

Next we verify that Proposition~6.7 does indeed hold for the corrected
notion of squarefree game.

\begin{prop}
If $p \in \cB$, then $p \in \cP \iff p \cong \0$.
\end{prop}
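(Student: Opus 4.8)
The plan is to prove both inclusions by Noetherian induction on $\cB$, ordered by a linear functional that strictly decreases along every move. Pointedness of $\RR_+\Gamma$, already exploited in the $3\implies 1$ argument, furnishes a functional $\ell$ with $\ell(\gamma)>0$ for all $\gamma\in\Gamma$ and $\ell>0$ on $\NN^d\minus\{\0\}$; since a move sends $p$ to $p-\gamma$, each move lowers $\ell$, so play terminates and induction is available. It then suffices to verify, for the candidate set $C=\{p\in\cB:p\cong\0\}$, the two properties that pin down $\cP$ among subsets of $\cB$: (a) no legal move joins two elements of $C$, and (b) every position of $\cB\minus C$ admits a legal move into $C$. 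Granting these, the inductive step is immediate: if $p\cong\0$ then by (a) every legal option leaves $C$ and is an N-position by induction, so $p\in\cP$; if $p\not\cong\0$ then by (b) some legal option lands in $C$ and is a P-position by induction, so $p\in\cN$. Thus $C=\cP\cap\cB$.

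The engine for (a) and (b) is the heap reformulation $1\iff5$ from the preceding proposition: in a squarefree game every move decreases the count of heaps of a single size by exactly one, so a position $p=(p_1,\dots,p_d)$ is the disjunctive sum of $p_i$ copies of the one-heap game of size $i$. First I would assign each size $i$ its Grundy value $g_i$, the $\mathrm{mex}$ over the moves $\gamma$ with $\gamma_+=e_i$ of the Grundy value of the heap-collection that one size-$i$ heap becomes; the same $\ell$ makes this recursion well founded. By the Sprague--Grundy theorem $p\in\cP$ exactly when the Nim-sum $\bigoplus_i p_ig_i$ vanishes, and I would then check, unwinding the definition of $\cong$, that its $\0$-class is precisely this zero set. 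For a squarefree game I expect $\cong$ to reduce to the parity congruence $p\cong q\iff\bigoplus_i(p_i-q_i)g_i=0$, a congruence modulo a finite-index period lattice containing $2\ZZ^d$; matching this lattice against the zero-Nim-sum set is where the combinatorial definition of $\cong$ does its work.

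The hard part will be property (b): from $p\not\cong\0$ one must cancel the nonzero Nim-sum $s=\bigoplus_{i\,:\,p_i\text{ odd}}g_i$ in a single move. Here the disjunctive structure is indispensable. Because $g_i=\mathrm{mex}(\text{options})$, every value below $g_i$ is genuinely reachable from one size-$i$ heap, so selecting a size $i$ whose $g_i$ carries the top bit of $s$ (necessarily with $p_i\ge1$) lets one move that single heap to the value $g_i\oplus s<g_i$, driving the total Nim-sum to $s\oplus g_i\oplus(g_i\oplus s)=0$ and landing in $C$. This one-pile cancellation is exactly what collapses for a merely weakly squarefree game: a move with two positive entries, such as $(1,1,0)$, consumes two heaps at once, the position ceases to be a disjunctive sum, Sprague--Grundy no longer applies, and (b) can fail---precisely the behavior of the rule set $\{(1,0,0),(0,1,0),(0,0,1),(1,1,0)\}$ flagged in the erratum. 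Property (a) is then the easy direction, since a single-heap move replaces one $g_i$ by a \emph{distinct} option value and so cannot preserve a zero Nim-sum, and the induction closes.
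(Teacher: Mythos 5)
Your Sprague--Grundy machinery is sound as far as it goes: condition~5 does exhibit a squarefree position as a disjunctive sum of single heaps, the mex recursion is well founded by pointedness, and the Bouton-style argument correctly shows that the zero-Nim-sum set satisfies your (a) and (b), hence equals $\cP$. The genuine gap is that the proposition is a statement about the congruence $\cong$, and your argument never engages the actual definition of $\cong$. As the paper's own proof makes explicit, $p \cong \0$ means that adding $p$ alters no outcomes: $p+q$ and $q$ have the same outcome for every $q$. You instead guess that $\cong$ ``reduces to the parity congruence $\bigoplus_i(p_i-q_i)g_i=0$'' and explicitly defer the verification (``matching this lattice against the zero-Nim-sum set is where the combinatorial definition of $\cong$ does its work''). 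That deferred step is not a detail; it is the entire content of the proposition. Under your guessed formula the $\0$-class \emph{is} the zero-Nim-sum set by definition, so what you actually prove is the classical fact that $\cP$ is the zero-Nim-sum set --- true, but not the stated equivalence, since Grundy values appear nowhere in the definition of $\cong$.

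The repair is short once the right definition is in hand, and it would turn your route into a complete proof genuinely different from the paper's: Grundy values add over disjunctive sums, so if $p$ has Grundy value $0$ then $p+q$ and $q$ have equal Grundy values, hence equal outcomes, for every $q$, which is precisely $p \cong \0$; conversely $p \cong \0$ forces $p \in \cP$ because $\0 \in \cP$. For comparison, the paper gets by with far less: it never decomposes the game into heaps and never mentions Grundy values. It proves $p,q \in \cP \implies p+q \in \cP$ by induction, using the squarefree hypothesis exactly once in the form of the splitting property (condition~1 of Proposition~1: a move legal on $p+q$ is legal on $p$ or on $q$), and then deduces the N-position half formally from the P-position half. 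Your approach buys more --- an explicit description of $\cP$ by Nim-sums of heap values --- but at the cost of importing all of Sprague--Grundy theory; the paper's induction is self-contained and goes straight at the outcome-preservation statement that $p \cong \0$ actually asserts.
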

\begin{proof}
Observe that $\0 \in \cP$; consequently, $p \cong \0 \implies p \in
\cP$.  Therefore it remains only to prove the other direction, namely
$p \in \cP \implies p \cong \0$.  This has two parts: $p + q \in \cP$
whenever $p,q \in \cP$, and $p + q \in \cN$ whenever $q \in \cN$.
However, the second follows from the first, because $q \in \cN
\implies q - \gamma \in \cP$ for some $\gamma \in \Gamma$, so adding
the two P-positions $p$ and $q - \gamma$ always yields another
P-position $p + q - \gamma$, whence $p + q \in \cN$.

To finish the proof, we show that $p,q \in \cP \implies p + q \in \cP$
by induction.  Clearly $p + q \in \cP$ if $p = q = \0$.  Therefore
assume $p,q \in \cP$ with $p \succ \0$ or $q \succ \0$, and
inductively assume that $\hat p \in \cP \implies \hat p + q \in \cP$
for all $\hat p \prec p$ and $p + \hat q \in \cP \implies p + \hat q
\in \cP$ for all $\hat q \prec q$.  Fix $\gamma \in \Gamma$ such that
$p + q - \gamma \in \NN^d$.  (Such a $\gamma$ exists by Proposition~1,
because the tangent cone axiom implies the existence of a move whose
only negative coordinate is $-1$ and occurs where $p + q$ is positive;
but even if no such $\gamma$ existed we would already be done anyway,
because then $p + q \in \cP$ by definition.)  By Proposition~1, either
$p - \gamma \in \NN^d$ or $q - \gamma \in \NN^d$.  Suppose $p - \gamma
\in \NN^d$.  Then $p - \gamma \in \cN$, so $p - \gamma - \gamma' \in
\cP$ for some $\gamma' \in \Gamma$.  By our induction hypothesis,
$(p-\gamma-\gamma') + q = (p+q-\gamma)-\gamma' \in \cP$, so
$p+q-\gamma \in \cN$.  If $q - \gamma \in \Gamma$, then a similar
argument still yields $p + q - \gamma \in \cN$.  Since $\gamma$ was
arbitrary, $p + q \in \cP$.
\end{proof}

The proof of the algorithm in Section~7 is incorrect, although the
existence of the algorithm is still true.  We offer a simpler proof
which obviates Section~7, and we also provide analysis of the
complexity of the algorithm.

\begin{thm}
There is an algorithm for computing $\cP$ for a squarefree game in
normal play that runs in $O(2^d|\Gamma|)$ time and requires $O(2^d)$
space.
\end{thm}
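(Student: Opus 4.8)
The plan is to show that membership in $\cP$ depends only on the residue $\bar p := p \bmod 2 \in \{0,1\}^d$, so that the entire (infinite) set $\cP$ is encoded by the finite table $S := \cP \cap \{0,1\}^d$ of $2^d$ bits. Granting this, "computing $\cP$" means producing $S$, after which any query $p \in \cP$ is answered by testing whether $\bar p \in S$. The two things to establish are therefore (i) the reduction $p \in \cP \iff \bar p \in \cP$, and (ii) an $O(2^d|\Gamma|)$-time, $O(2^d)$-space procedure that fills in $S$.

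For (i), I would first invoke pointedness of the cone $\RR_+\Gamma$ (which contains $\RR_+^d$) to fix an integer linear functional $\ell$ with $\ell(e_i) > 0$ for all $i$ and $\ell(\gamma) > 0$ for all $\gamma \in \Gamma$; thus every move strictly decreases $\ell$, giving a well-founded order for induction. The heart of the reduction is the claim $2\NN^d \subseteq \cP$, which I would prove by induction on $\ell(m)$: by the squarefree condition (Proposition 1, part~3), any move $\gamma$ applicable to $2m$ has its unique positive entry equal to $1$ at some coordinate $i$ with $m_i \geq 1$, and the same $\gamma$ is again applicable at $2m - \gamma$, reaching $2m - 2\gamma = 2(m-\gamma)$ with $m - \gamma \in \NN^d$ and $\ell(m - \gamma) < \ell(m)$; by the inductive hypothesis $2(m-\gamma) \in \cP$, so $2m - \gamma \in \cN$, and since $\gamma$ was arbitrary, $2m \in \cP$. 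Writing $p = \bar p + 2u$ with $u \in \NN^d$, the forward direction of (i) then follows from $\cP + \cN \subseteq \cN$ and the backward direction from $\cP + \cP \subseteq \cP$, both supplied by Proposition 2.

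For (ii), the reduction turns the game into a finite recursion on $\{0,1\}^d$: for $r \in \{0,1\}^d$ one has $r \in S$ exactly when every $\gamma \in \Gamma$ whose positive coordinate $i(\gamma)$ satisfies $r_{i(\gamma)} = 1$ (these are precisely the moves applicable at $r$) sends $r$ to $r \oplus \bar\gamma \notin S$, where $\oplus$ is coordinatewise addition mod $2$ and $\bar\gamma = \gamma \bmod 2$. The key point making this computable is that the recursion is acyclic: if $\gamma$ is applicable at $r$, then $r - \gamma \in \NN^d$ equals $(r \oplus \bar\gamma) + 2w$ for some $w \in \NN^d$, so
\[
\ell(r \oplus \bar\gamma) \le \ell(r - \gamma) = \ell(r) - \ell(\gamma) < \ell(r),
\]
i.e. every residue-successor has strictly smaller $\ell$-value. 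Consequently a memoized depth-first evaluation (equivalently, backward induction through the states in increasing $\ell$-order) visits each of the $2^d$ residues once and, at each, scans the $\le |\Gamma|$ moves using only constant-time bitmask operations (XOR against precomputed $\bar\gamma$, and a table of $i(\gamma)$). This yields the claimed $O(2^d|\Gamma|)$ time and $O(2^d)$ space, the latter accounting for the bit-table $S$ together with a recursion stack whose depth is bounded by the number of states.

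I expect the main obstacle to be part (i), and within it the lemma $2\NN^d \subseteq \cP$: this is the structural fact that collapses an infinite game to a Boolean function of the parity vector, and it is where both the squarefree hypothesis and the positive functional $\ell$ are genuinely used. By contrast, once acyclicity is in hand, the algorithm and its complexity analysis are routine. A pleasant feature worth noting is that $\ell$ is needed only to certify termination and the single-visit bound; it never has to be computed, so the procedure requires no linear programming and the bound $O(2^d|\Gamma|)$ stays clean.
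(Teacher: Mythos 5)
Your proposal is correct, and its algorithmic core---memoized \textsc{nor}-recursion over the $2^d$ residues in $\{0,1\}^d$, with options outside the unit cube reduced modulo $2$, giving $O(2^d|\Gamma|)$ time and $O(2^d)$ space---is exactly the paper's dynamic program. The difference lies in what you prove versus what the paper cites. The paper obtains the modulo-$2$ reduction by invoking Theorem~6.11 of the original article (whose proof rests on the corrected Proposition~2); you instead re-derive it from scratch via the lemma $2\NN^d \subseteq \cP$, proved by induction along a positive integer functional $\ell$ furnished by pointedness of $\RR_+\Gamma \supseteq \RR_+^d$, and then transfer outcomes between $p$ and $\bar p$ using $\cP + \cP \subseteq \cP$ and $\cP + \cN \subseteq \cN$, which are exactly the two facts established in the proof of Proposition~2. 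This makes your argument self-contained where the paper's is a citation. More substantively, your acyclicity observation---that $\ell(r \oplus \bar\gamma) \le \ell(r-\gamma) < \ell(r)$ for every residue-successor---addresses a point the paper leaves implicit: after options are folded back into $\{0,1\}^d$, it is not obvious that the induced recursion graph on residues has no directed cycles (a move zeroes out coordinate $i(\gamma)$ but can flip other coordinates from $0$ to $1$), and without acyclicity the memoized recursion would not visibly terminate or be well-defined. The paper tacitly relies on this; your $\ell$-argument certifies it, so on this point your write-up is more careful than the published proof, at the cost of some length.
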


\begin{proof}
By Theorem~6.11 it suffices to compute $\cP_0 = \cP \cap \{0,1\}^d$.
Let the \emph{outcome} of a position $\pp \in \cB$ be P if $\pp$ is a
P-position, and N otherwise.  If we use \textsc{true} to encode
P-positions and \textsc{false} to encode N-positions, then the outcome
of a position $\pp \in \cB$ is the logical \textsc{nor} of its legal
options. Therefore, we can use a dynamic programming approach by
recursively computing the outcomes of all the positions in $\{0,1\}^d$
while storing the results in memory so that the outcome of each
position need only be computed once. Furthermore, if a legal option
$\pp'$ of $\pp$ lies outside $\{0,1\}^d$, then by Theorem 6.11 the
outcome of $\pp'$ is the same if we take its coordinates
modulo~$2$. Since there are $2^d$ positions to compute and each
position is computed exactly once, which requires looking at the
\textsc{nor} of at most $|\Gamma|$ outcomes, the algorithm runs in
$O(2^d|\Gamma|)$ time and requires $O(2^d)$ space.
\end{proof}

\subsection*{Acknowledgement}

We are grateful to Alex Fink for alerting us to these errors.


\end{document}